\setlist[enumerate]{leftmargin=7mm, label=\alph*)}
\theoremstyle{plain}
\newtheorem{theor}{Theorem}[section]
\newtheorem{prop}[theor]{Proposition}
\newtheorem{lem}[theor]{Lemma}
\newtheorem{cor}[theor]{Corollary}
\theoremstyle{definition}
\newtheorem{de}[theor]{Definition}
\newtheorem{const}[theor]{Construction}
\newtheorem{ex}[theor]{Example}
\newtheorem {re}[theor]{Remark}
\DeclareMathOperator{\Aut}{Aut}
\def\Ker{{\rm Ker}\,}
\def\GG{{\mathbb G}}
\def\KK{{\mathbb K}}
\def\ZZ{{\mathbb Z}}
\def\GA{{\mathbb G}_a}
\def\NN{{\mathbb N}}
\begin{document}
\date{}

\title[]{Isolated torus invariants and\\ automorphism groups of rigid varieties}
\keywords{Rigid affine varieties, automorphism groups, torus actions.}
\subjclass[2020]{Primary 14J50, 14R20; Secondary 13A50, 14R05}

\author[Borovik]{Viktoriia Borovik}
\address[Borovik]{Osnabr\"uck University,
Fachbereich Mathematik/Informatik
Albrechtstr.~28a,
49076 Osnabrück, Germany.}
\email{vborovik@uni-osnabrueck.de}

\author[Gaifullin]{Sergey Gaifullin}
\address[Gaifullin]{Lomonosov Moscow State University, Faculty of Mechanics $\&$ Mathematics, Department of Higher Algebra, Leninskie Gory~1, 119991 Moscow, Russia;
$\&$ Higher School of Economics, Faculty of Computer Science, Pokrovsky Boulvard 11, 109028 Moscow, Russia.}
\thanks{The second author was supported by the RSF grant 22-41-02019.\ }
\email{sgayf@yandex.ru}


\maketitle
\thispagestyle{empty}

\begin{abstract}
Perepechko and Zaidenberg conjectured that the neutral component of the automorphism group of a rigid affine variety is a torus. We prove this conjecture for toric varieties and varieties with a torus action of complexity one. We also obtain a criterion for an $m$-suspension over a rigid variety to be rigid (for every rigid variety and every regular function). Additionally, we study the automorphism group of $m$-suspensions satisfying this criterion.  
\end{abstract}

\section{Introduction}

Let $\KK$ be an algebraically closed field of characteristic zero and $\GA$ be its additive group $(\KK,+)$. Suppose that $X$ is an affine algebraic variety over $\KK$. One~approach to investigate the group of regular automorphisms of $X$ is to study $\GA$-actions on~$X$. In general, the group $\Aut(X)$ of regular automorphisms of an affine variety $X$ is not a finite-dimensional algebraic group. In some cases this group is sufficiently large that it is not possible to give an explicit description of it. For instance, if $\dim X\geq 2$ and $X$ admits a nontrivial $\GA$-action, then $\Aut(X)$ is not~finite-dimensional. It is therefore natural to consider the class of rigid varieties. Recall that a variety is called {\it rigid} if it does not admit any nontrivial~$\GA$-actions. Similarly, we say that an algebra is rigid if it does not admit nontrivial~$\GA$-actions.

In some sense, the case of a rigid variety is the opposite case of a flexible variety, see \cite{AFKKZ}. For a flexible variety, there is a way to construct the set of flexible varieties from a given one. We can consider \emph{suspensions} over the variety $X$, that is, subvarieties in $\KK^2\times X$ given by $uv=f$, where $u$ and $v$ are coordinate functions on~$\KK^2$, and $f$ is a non-constant regular function on $X$. The suspension over a~flexible variety is flexible, cf. \cite{AKZ}.

A natural question arises whether we can obtain many rigid varieties from one by the same or similar construction. In~\cite[Definition~3.1]{Ga} the following generalization of the notion of suspensions is given.

\enlargethispage{\baselineskip}

\begin{de}
Let $f$ be a non-constant regular function on the affine variety $X$. We fix positive integers $k_1,\ldots, k_m$. By \emph{$m$-suspension} over $X$ we mean a subvariety in $\KK^m\times X$ cut by $y_1^{k_1}\cdots y_m^{k_m}-f$, where $y_1,\ldots,y_m$ are coordinate functions on~$\KK^m$. We denote this variety by $\mathrm{Susp}(X,f,k_1,\ldots,k_m)$. 
\end{de}

For $m=1$, the notion of $1$-suspension coincides with adjoining roots of $f$. In~\cite{FMJ}, a sufficient condition for a $1$-suspension to be rigid is obtained. This condition is expressed in terms of the number $k_1$, the variety $X$ and the function $f$. 
We are interested in the following question. For which $k_1,\ldots,k_m$ if $X$ is rigid, then $\mathrm{Susp}(X,f,k_1,\ldots,k_m)$ is rigid? It turns out that this is true if and only if~$\gcd(k_1,\ldots,k_m)=1$, see Theorem~\ref{perv}. In particular, we show that if $k\geq 1$, then there exists a rigid variety $X$ and a function $f\in\KK[X]$ such that the variety~$Y$ obtained by adjoining $k$-roots of $f$ is not rigid. 

It is clear that an $m$-suspension always admits an action of the $(m-1)$-dimensional algebraic torus~$T$.  We develop a technique related to the so-called isolated irreducible $T$-semi-invariants, see Definition~\ref{iisi}, which for a wide class of varieties allows us to describe the normalizer $N(T)$ of the torus $T$ in the automorphism group. When a variety is rigid, by~\cite[Theorem~1]{AG}, its automorphism group contains a unique maximal torus. Thus, the normalizer of this maximal torus coincides with the entire automorphism group. We therefore obtain a technique for investigating the automorphism group of a rigid variety. Applying this technique, we prove that in the case when an $m$-suspension $W:=\mathrm{Susp}(X,f,k_1,\ldots,k_m)$ with~$m\geq 2$ and $\gcd(k_1,\ldots,k_m)=1$ is an $m$-suspension over the rigid variety $X$ with only constant invertible functions, then~$\Aut(W)$ is a finite extension of the direct product $(\KK^\times)^{m-1}\times \Aut(X,f)$, where $\Aut(X,f)$ is the group of all automorphisms of $X$ multiplying $f$ by a constant, cf. Theorem~\ref{dva}. 

In \cite{Ra}, Ramanujam introduced the notion of a connected automorphism group. This notion allows us to define the neutral component of the automorphism group of an affine variety, see Definitions~\ref{dd} and \ref{ddd}. As we noted above, if the variety~$X$ is not rigid and its dimension is at least 2, then $\Aut(X)$ is always infinite-dimensional; moreover, its neutral component $\Aut(X)^0$ is also infinite-dimensional. Conversely, it is a natural conjecture that if $X$ is rigid, then $\Aut(X)^0$ is an algebraic group and even a torus of dimension at most $\dim(X)$, see~\cite[Conjecture~1.0.1]{PZ}. Using our technique, we prove this conjecture when $X$ is a (non-normal) toric affine variety and when $X$ is a rational normal affine variety with only constant invertible functions, finitely generated divisor class group and admitting a torus action of complexity one, see Theorem~\ref{vteo}. We also prove that if $\Aut(X)^0$ is a~torus, then~$\Aut(W)^0$ is a torus for the $m$-suspension $W=\mathrm{Susp}(X,f,k_1,\ldots,k_m)$ with $m\geq 2$ and $\gcd(k_1,\ldots,k_m)=1$, see Corollary~\ref{clcv}.

\subsection*{Acknowledgements.}
The second author is a Young Russian Mathematics award winner and would like to thank its sponsors and jury. 

\section{Preliminaries}
\noindent
In this section, we recall the key definitions and results that we will employ.

\subsection{Derivations}

Let $A$ be a $\KK$-domain. 

\begin{de}
A linear map $\partial\colon A\rightarrow A$ is called a \emph{derivation} if it satisfies Leibniz's rule: $\partial(ab)=a\partial(b)+b\partial(a)$.

A derivation is called {\it locally nilpotent (LND)} if for any $a \in A$ there exists $n\in \NN$ such that $\partial^n(a)=0$.
\end{de}
\noindent
The exponential map gives a correspondence between LNDs and subgroups in~$\mathrm{Aut}(A)$ isomorphic to the additive group $(\KK,+)$.
Let $F$ be an abelian group.

\begin{de}
An algebra $A$ is called {\it $F$-graded} if $$A=\bigoplus_{f\in F}A_f \text{ and } A_fA_g\subset A_{f+g}.$$
\end{de}

\begin{de}
A derivation $\partial\colon A\rightarrow A$ is called {\it $F$-homogeneous of degree $f_0\in F$} if for all $a\in A_f$ we have $\partial(a)\in A_{f+f_0}$.
\end{de}
\noindent
Each locally nilpotent derivation $\partial$ induces the function
\begin{align*}
\nu_\partial\colon A &\rightarrow \mathbb{Z}_{\geq 0}\cup\{-\infty\}    \\
a &\mapsto \min\{n\in\mathbb{Z}_{>0}\mid\partial^n(a)=0\}-1,\ \text{when}\ a\neq 0,
\end{align*}
and $\nu_\partial(0)=-\infty$. By \cite[Proposition~1.9]{Fr}, $\nu_\partial$ is a degree function, i.e.,
\begin{align*}
\nu_\partial(f+g) &\leq\max\{\nu_\partial(f),\nu_\partial(g)\},\\
\nu_\partial(fg) &=\nu_\partial(f)+\nu_\partial(g).
\end{align*}
Let $A$ be a finitely generated $\ZZ$-graded algebra.
\begin{lem}
Let $\partial$ be a derivation, then $\partial=\sum_{i=l}^k\partial_i$, where $\partial_i$ is a homogeneous derivation of degree $i$. 
\end{lem}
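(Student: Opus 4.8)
The plan is to define each $\partial_i$ as the ``degree-$i$ part'' of $\partial$ obtained by composing $\partial$ with the projections onto the graded pieces, and then to verify that these parts are genuine homogeneous derivations, that only finitely many of them are nonzero, and that they sum to $\partial$. Write $A=\bigoplus_{n\in\ZZ}A_n$ and let $\pi_n\colon A\to A_n$ denote the projection onto the $n$-th graded component, so that $a=\sum_n\pi_n(a)$ is a finite sum for every $a\in A$. For a homogeneous element $a\in A_n$ I would set $\partial_i(a)=\pi_{n+i}(\partial(a))$ and extend $\partial_i$ to all of $A$ by $\KK$-linearity; equivalently $\partial_i(a)=\sum_n\pi_{n+i}(\partial(\pi_n(a)))$. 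By construction $\partial_i(A_n)\subseteq A_{n+i}$, so $\partial_i$ is $\ZZ$-homogeneous of degree $i$, and on a homogeneous $a\in A_n$ one has $\sum_i\partial_i(a)=\sum_i\pi_{n+i}(\partial(a))=\partial(a)$, hence $\sum_i\partial_i=\partial$ on all of $A$ by linearity.

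Next I would check that each $\partial_i$ satisfies the Leibniz rule. Since both sides of the Leibniz identity are bilinear, it suffices to verify it on homogeneous elements $a\in A_n$ and $b\in A_m$. Here $ab\in A_{n+m}$, so $\partial_i(ab)=\pi_{n+m+i}(\partial(ab))=\pi_{n+m+i}(a\partial(b)+b\partial(a))$. Because multiplication sends $A_n\times A_{m+i}$ into $A_{n+m+i}$, the degree-$(n+m+i)$ component of $a\partial(b)$ is exactly $a\,\pi_{m+i}(\partial(b))=a\,\partial_i(b)$, and likewise that of $b\partial(a)$ is $b\,\partial_i(a)$. Thus $\partial_i(ab)=a\partial_i(b)+b\partial_i(a)$, so each $\partial_i$ is a homogeneous derivation of degree $i$.

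It remains to prove that $\partial_i=0$ for all but finitely many $i$, and this is the one place where finite generation is essential. Since $A$ is $\ZZ$-graded and finitely generated, I may replace a finite generating set by its (finitely many) homogeneous components and thus take homogeneous generators $a_1,\dots,a_s$ with $a_t\in A_{d_t}$. Each $\partial(a_t)$ is a single element of $A$ and therefore has only finitely many nonzero homogeneous components, so $\partial_i(a_t)=\pi_{d_t+i}(\partial(a_t))$ is nonzero for only finitely many $i$; taking the union over $t=1,\dots,s$ yields a finite range $l\le i\le k$ outside of which every $\partial_i$ vanishes on all generators. A derivation is determined by its values on a generating set, so $\partial_i=0$ for $i<l$ and for $i>k$, giving the desired finite decomposition $\partial=\sum_{i=l}^{k}\partial_i$.

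The main thing to be careful about is the final implication---that a derivation vanishing on the generators $a_1,\dots,a_s$ is identically zero. This follows by induction on the length of a monomial in the $a_t$: the Leibniz rule just established propagates vanishing from factors to products, and linearity handles arbitrary sums. Everything else is routine component bookkeeping; the only genuinely load-bearing hypothesis is the finite generation, which is precisely what bounds the degrees $i$ for which $\partial_i\neq0$.
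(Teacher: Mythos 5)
Your proof is correct and follows essentially the same route as the paper: define the degree-$i$ components $\partial_i$ via the projections onto graded pieces, check the Leibniz rule componentwise, and use a finite (homogeneous) generating set to bound the range of $i$. You merely reorder the steps (defining all $\partial_i$ first and then truncating, rather than bounding the degrees first) and spell out the details the paper leaves implicit, such as passing to homogeneous generators and the fact that a derivation vanishing on generators is zero.
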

\begin{proof}
Let $a_1,\ldots,a_m$ be the generators of $A$. Then $\partial (a_j)=\sum_{i=l_j}^{k_j}b_i$, where $b_i\in A_i$. Take $l:=\mathrm{min}\{l_1,\ldots,l_m\}$, $k:=\mathrm{max}\{k_1,\ldots,k_m\}$. Using Leibniz's rule we get
$$\forall a\in A_j \colon \; \partial (a)\in\bigoplus_{i=l}^k A_{j+i}.$$ Thus, $\partial=\sum_{i=l}^k\partial_i$, where $\partial_i\colon A_j\rightarrow A_{j+i}$ is a linear map. Leibniz's rule for $\partial_j$ follows from Leibniz's rule for $\partial$.
\end{proof}

\begin{re}
Further, when we write $\partial=\sum_{i=l}^k\partial_i$, we will assume that $\partial_l\neq 0$ and~$\partial_k\neq 0$.
\end{re}

We need the following lemma. For the proof, we refer to~\cite{Re}.
\begin{lem}\label{fl}
Let $A$ be a finitely generated $\ZZ$-graded algebra, $\partial\colon A\rightarrow A$ be an~LND. Assume $\partial=\sum_{i=l}^k\partial_i$, where $\partial_i$ is the homogeneous derivation of degree $i$. Then $\partial_l$ and $\partial_k$ are LNDs.
\end{lem}
\begin{cor}\label{flz}
If $A$ admits an LND, then $A$ admits a $\ZZ$-homogeneous LND. 
\end{cor}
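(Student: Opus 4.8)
The plan is to read off the conclusion directly from Lemma~\ref{fl}(1), since that lemma has already isolated the homogeneous components of a derivation that remain locally nilpotent. First I would take a nonzero LND $\partial$ on $A$ (this is what ``$A$ admits a LND'' should mean, as the zero derivation is homogeneous but uninteresting). Applying the decomposition lemma, I would write $\partial=\sum_{i=l}^k\partial_i$, where each $\partial_i$ is the $\ZZ$-homogeneous derivation of degree $i$, and I would invoke the standing convention from the Remark that $\partial_l\neq 0$ and $\partial_k\neq 0$.

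Next I would apply part (1) of Lemma~\ref{fl}, which asserts precisely that when $\partial$ is LND the extreme components $\partial_l$ and $\partial_k$ are themselves LND. Since $\partial_l$ is $\ZZ$-homogeneous of degree $l$ by construction and is nonzero by the convention, $\partial_l$ is the desired nonzero $\ZZ$-homogeneous LND, and the corollary follows. (The highest component $\partial_k$ would serve equally well.)

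I do not expect any real obstacle here, as all the substantive analysis lives inside Lemma~\ref{fl}; the corollary is essentially a restatement. The only point requiring a word of care is nondegeneracy: one must ensure the selected homogeneous component is not identically zero, which is exactly what the convention $\partial_l\neq 0$ guarantees. Thus the argument reduces to citing the existence of the homogeneous decomposition together with Lemma~\ref{fl}(1).
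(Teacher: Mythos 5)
Your argument is correct and is exactly the intended one: the paper states this corollary without proof as an immediate consequence of Lemma~\ref{fl}(1), which is precisely how you derive it. Your extra care about nondegeneracy (using the convention $\partial_l\neq 0$) is a reasonable and harmless addition.
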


\begin{de}
An affine algebraic variety $X$ is called {\it rigid} if its algebra of regular functions $\KK[X]$ does not admit any nonzero LNDs.
\end{de}

By~\cite[Theorem~1]{AG}, the automorphism group of a rigid variety contains a unique maximal algebraic torus.

\subsection{Neutral component of an automorphism group}
Let us define the connected component $\Aut(X)^0$ of the group $\Aut(X)$. We are following \cite{Ra}.

\begin{de}\label{dd} The family $\{\varphi_b \,|\, b\in B\}$ of automorphisms of a variety $X$, where the parameterizing set $B$ is an algebraic variety, is {\it an algebraic family} if the map~$B\times X\rightarrow X$ given by $(b,x)\mapsto \varphi_b(x)$ is a morphism.
\end{de}

\begin{de}\label{ddd} \emph{The connected component} $\Aut(X)^0$ of the group $\Aut(X)$ is a~subgroup of
automorphisms that can be included in the algebraic family $\{\varphi_b \,|\, b\in B\}$ with an irreducible
variety~$B$ as a base such that $\varphi_{b_0}=\mathrm{id}_X$ for some $b_0\in B$.
\end{de}
\noindent
It is easy to check that $\Aut(X)^0$ is indeed a subgroup, see~\cite{Ra}.
If $G$ is an algebraic group and $G\times X\rightarrow X$ is a regular action, then we can take $B = G$ and consider the algebraic family $\{\varphi_g \,|\, g\in G\}$, where $\varphi_g(x)=gx$. Thus, any automorphism given by an element in $G$ is included in $\Aut(X)^0$. In particular, every $\GG_a$- and every $\GG_m$-subgroup is contained in $\Aut(X)^0$, where $\GG_m \simeq (\KK^{\times}, \cdot)$.


\subsection{Trinomial varieties}\label{tv}

We are giving a definition of a trinomial variety according to~\cite{H-W}.

\begin{const}\label{odyn} \cite[Construction 1.1]{H-W}. 
Suppose we are given positive integers~$r$ and~$n$, a non-negative integer $m$ and $q \in \left\{ 0, 1 \right\}$. Let us fix the partition~$n = n_{q} + \ldots+ n_{r}$ of~$n$ into positive integer summands. 
Consider a polynomial algebra $B$ in $m+n$ variables. We denote these variables by $T_{ij}$ and~$S_k$:
$$
B= \KK \left[ {T_{ij}, S_{k} \mid q \leq i \leq r, \!\ 1 \leq j \leq n_{i}, \!\ 1 \leq k \leq m} \right].
$$
For each $i = q, \ldots, r$ we fix a tuple of positive integers $l_{i} = \left( {l_{i1},  \ldots , l_{in_{i}}} \right)$. So we can consider the following monomial:
$$
	T_{i}^{l_{i}} = T_{i1}^{l_{i1}}  \cdots T_{in_{i}}^{l_{in_{i}}} \in B. 
$$
We now define {\it a trinomial algebra} $R(A)$, which we construct from some data $A$.  These data are different for two types of trinomial algebras. 
\par \emph{Type 1.} Let $q = 1$ and $A = (a_{1},  \ldots, a_{r})$ be a list of distinct elements of~$\KK$.  Set~$I = \left\{ {1, \ldots, r - 1} \right\}$, then take
\begin{equation*}
	g_{i} := T_{i}^{l_{i}} - T_{i+1}^{l_{i+1}} - (a_{i+1} - a_{i}) \in B \text{ for } i \in I.
\end{equation*}
\par \emph{Type 2.} Let $q = 0$ and take a matrix $A \in {\rm Mat}_{2\times (r+1)}(\KK)$,
\begin{equation*}
	A = \begin{bmatrix}
		a_{10} \ a_{11} \ a_{12} \ \cdots\  a_{1r} \\
		a_{20} \ a_{21} \ a_{22} \ \cdots \ a_{2r} \\
	\end{bmatrix},
\end{equation*}
 with pairwise linearly independent columns. Let us set $I = \left\{ {0,  \ldots , r - 2} \right\}$ and 
\begin{equation*}
	g_{i} := \det \begin{bmatrix}
		T_{i}^{l_{i}} \ T_{i+1}^{l_{i+1}} \ T_{i+2}^{l_{i+2}} \\
		a_{1i} \ a_{1i+1} \ a_{1i+2}\\
		a_{2i} \ a_{2i+1} \ a_{2i+2}
	\end{bmatrix} \in B \text{ for } i \in I.
\end{equation*}
For both types we define $R(A)$ to be a factor algebra $B/\langle g_{i}\, |\, i \in I\rangle$.
\end{const}
\begin{de} The variety $X(A) = \mathrm{Spec}(R(A))$ is called a {\it trinomial variety}.  The type of a trinomial variety is the type of the corresponding trinomial algebra.\end{de} 

\begin{re}
The dimension of $X(A)$ equals $m+n-r+1$.
\end{re}

We can define an action of $\mathbb{T}\cong (\KK^\times)^{n+m-r}$ of complexity one on the variety~$X(A)$. Recall that \emph{the complexity} of the action of the reductive group $G$ is the codimension of the generic orbit of its Borel subgroup. In the case when $G$ is a torus acting effectively on the variety $X$, the complexity is simply $\dim(X)-\dim(G)$. It~is known that the character group of an algebraic torus is a free abelian group, and setting an action of a torus of dimension $k$ on $X$ is equivalent to setting $\mathbb{Z}^k$-grading on the algebra of regular functions $\KK[X]$. We define a $\mathbb{Z}^{m+n-r}$-grading on~$R(A)$ as follows. Let us assume that the variable $T_{ij}$ has degree $w_{ij}$ and the variable~$S_k$ has degree $v_k$. These degrees with commutativity relations, $\deg T_i^{l_i}=\deg T_j^{l_j}$, and~$\deg T_i^{l_i}=0$ for type 1, generate the group $\mathbb{Z}^{m+n-r}$, see~\cite{H-W} for details.  

\begin{de}\label{ddppd}
A $\mathbb{Z}^r$-grading on the $\KK$-algebra $B$ is called {\it pointed} if $B_0=\KK$ and for each $a\neq 0\in\mathbb{Z}^r$ one of the spaces $B_a$ and $B_{-a}$ is zero. (Note that the second condition is equivalent to saying that the weight cone does not include a~line.)

If $\mathbb{Z}^r$-grading is pointed, we call the corresponding torus action pointed. 
\end{de}
\begin{re}
 If $X(A)$ is of type 2, then the $\mathbb{T}$-action defined above is pointed.    
\end{re}

\textit{}
\section{Normalizer of a torus}

Consider a torus $T$ acting efficiently on an affine variety $X$.  Then we can consider~$T$ as a subgroup of $\Aut(X)$. We denote by $N(T)$ the normalizer of $T$ in~$\Aut(X)$, i.e.,
$$
N(T)=\{g\in \Aut(X)\mid g\circ t\circ g^{-1}\in T \text{ for all } t\in T\}.
$$
Recall that a regular function $f\in\KK[X]$ is called {\it semi-invariant with respect to $T$} if for all $t\in T$ we have $t\cdot f=\lambda f$ for some $\lambda=\lambda(t)\in \KK^\times$. A character $\lambda\colon T\rightarrow \KK^\times$ is called {\it a weight} of $f$. We call a semi-invariant $f$ {\it  irreducible} if it is irreducible as an element of $\KK[X]$. It is easy to show that a $T$-semi-invariant $f$ is irreducible if it is not invertible and cannot be decomposed into the product of two non-invertible $T$-semi-invariants. Let us denote the lattice of $T$-characters by $M$. 

\begin{de}\label{iisi}
We call an irreducible $T$-semi-invariant $f$ of weight $\omega$ {\it isolated} if there exists a linear function $\alpha$ on $M_\mathbb{Q}=M\otimes_{\mathbb{Z}}\mathbb{Q}$ such that
\begin{enumerate}
\item[1.] $\alpha(\omega)>0$;
\item[2.] if $\alpha(\omega')>0$ for the weight $\omega'$ of an irreducible $T$-semi-invariant $h$, then $h=\gamma f$, where $\gamma$ is an invertible $T$-semi-invariant;
\item[3.] if $\omega''$ is the weight of an invertible $T$-semi-invariant, then $\alpha(\omega'')=0$.
\end{enumerate}
We say that $\alpha$ is an $f$-{\it separating} function.
\end{de}

\begin{lem}\label{finas}
There exist finitely many isolated irreducible $T$-semi-invariants up to scaling by elements of $\KK[X]^\times$.								
\end{lem} 
\begin{proof}
The algebra $\KK[X]$ is finitely generated. Hence, there exists a finite system $S$ of $T$-semi-invariants generating $\KK[X]$. We may assume that $S$ consists of irreducible and invertible semi-invariants.

Let $f$ be an isolated irreducible $T$-semi-invariant, and let $\alpha$ be an $f$-separating function. 
Each $T$-semi-invariant has a~weight equal to the sum of the weights of the elements of $S$. If $\alpha$ has non-positive values on the weights of all $s\in S$, then it has non-positive values on the weights of all semi-invariants. This is a contradiction with $\alpha(\omega)>0$, where $\omega$ is the weight of $f$. Hence, there exists an element $s\in S$ such that $\alpha$ is positive on the weight of $s$. Therefore, $s=\gamma f$ for some $\gamma\in\KK[X]^\times$.
Since $S$ is finite, we get the result.
\end{proof}
\begin{de}
    We call $[f] := \{h \in \KK[X] \mid h = \gamma f \text{ for some } \gamma \in\KK[X]^\times\}$ \emph{the association class of the isolated irreducible $T$-semi-invariant $f$}.
\end{de}
Thus, Lemma~\ref{finas} states that there are finitely many association classes of isolated irreducible $T$-semi-invariants.

A natural question is then how to prove that the function $f$ is an isolated irreducible semi-invariant. The following proposition shows that if there exists an~$f$-separating function on generators, then $f$ is irreducible.

\begin{prop}\label{dsi}
Let $S$ be the set of $T$-semi-invariant generators of $\KK[X]$. 
\begin{enumerate}
    \item[$(i)$] We fix a non-invertible element $f\in S$ and denote its weight by~$\omega$. Suppose that there exists a linear function $\alpha$ on $M_\mathbb{Q}$ such that $\alpha(\omega)>0$ and $\alpha(\omega')\leq 0$ for all weights $\omega'$ of the other generators $s\in S$. Then $f$ is an irreducible isolated $T$-semi-invariant. 
    \item[$(ii)$]  Every irreducible isolated $T$-semi-invariant has the form $\gamma f$ for some $f$ satisfying $(i)$ and $\gamma\in\KK[X]^\times$.
\end{enumerate}
\end{prop}
\begin{proof}
$(i)$. Suppose that $f$ is reducible. Let $f=f_1 \cdots f_m$ be a decomposition into irreducible semi-invariants. Then there exists $f_k$ such that for its weight $\omega_k$ we have $\alpha(\omega_k)>0$. But $f_k$ is a polynomial in $s\in S$. Since $\alpha(\omega_k)>0$, then $f \mid f_k$. This is possible only if $f=f_k$, i.e., $f$ is irreducible.

Let us prove that $f$ is isolated. Take an irreducible element $h$ such that $\alpha(h)>0$. Then, as stated above, $f\mid h$. Then $h=\gamma f$ for some $\gamma\in \KK[X]^\times$. 

$(ii)$. In the proof of the Lemma~\ref{finas}, it is shown that every irreducible isolated $T$-semi-invariant has the form $\gamma f$ for some $f\in S$. But then $f$ is also an irreducible isolated $T$-semi-invariant. Hence, $f$ satisfies $(i)$.
\end{proof}

Note that Proposition~\ref{dsi} gives an explicit algorithm for finding all irreducible isolated $T$-semi-invariants.
If $f$ is an isolated $T$-semi-invariant and~$\varphi\in N(T)$, then~$\varphi(f)$ is also an isolated $T$-semi-invariant. Thus, each element of $N(T)$ permutes the association classes of isolated semi-invariants. 

Let $\mathcal{A}\subseteq \KK[X]$ be a subalgebra generated by all invertible functions and isolated semi-invariants, and let $\mathcal{B}\subseteq \mathcal{A}\subseteq \KK[X]$ be a subalgebra generated by all invertible functions.  Then $\mathcal{A}$ is a $N(T)$-invariant subalgebra. Both subalgebras $\mathcal{A}$ and $\mathcal{B}$ are finitely generated, so we can consider their spectra $\mathrm{Spec}\,\mathcal{A}=Y$, $\mathrm{Spec}\,\mathcal{B}=Z$. The subalgebra $\mathcal{B}$ is $\Aut(X)$-invariant in $\KK[X]$ and $\Aut(Y)$-invariant in $\KK[Y]$.

Consider the restrictive homomorphism $\Psi\colon \Aut(X)\rightarrow \Aut(Y)$, $\Psi(\varphi)=\varphi|_{\mathcal{A}}$. The image $\widehat{T}$ of the torus $T$ under the action of $\Psi$ is an algebraic subtorus in~$\Aut(Y)$. Clearly, the isolated irreducible $\widehat{T}$-semi-invariants coincide with the isolated irreducible $T$-semi-invariants. Thus we have $\Psi( N(T))\subseteq N(\widehat{T})$. We can also consider the restrictive homomorphism $\Phi\colon \Aut(Y)\rightarrow \Aut(Z)$, $\Phi(\varphi)=\varphi|_{\mathcal{B}}$. Thus, we have the following chain of homomorphisms
$$
N(T)\xrightarrow\Psi N(\widehat{T})\xrightarrow\Phi \Aut(Z).
$$
The variety $Z$ is toral, i.e., $\KK[Z]$ is generated by invertible elements. The automorphism groups of toral varieties are studied in \cite{ShT}. There it is proved that the neutral component of $\Aut(Z)^0$ is a torus and $\Aut(Z)$ is a discrete extension of~$\Aut(Z)^0$.  We use the homomorphism $\Phi$ to prove the following theorem.
\begin{theor}\label{isol}
\begin{enumerate}
    \item[$(i)$] The neutral component $N(\widehat{T})^0$ is a torus. 
    \item[$(ii)$] If $X$ does not admit non-constant invertible functions, then $N(\widehat{T})$ is a finite extension of a torus.  
\end{enumerate}
\end{theor}
\begin{proof}
$(i)$.  Since we know that the image of $\Phi$ is contained in the automorphism group of a toral variety $Z$, it remains to prove that the neutral component of its kernel is a torus. As we mention above, the group $N(\widehat{T})$ permutes the association classes of the isolated $\widehat{T}$-semi-invariants. Thus we obtain a homomorphism~$\xi\colon N(\widehat{T})\rightarrow S_m$ to the symmetric group $S_m$, where $m$ is the number of the association classes of the isolated $\widehat{T}$-semi-invariants in $\KK[Y]$. 

We restrict $\xi$ to the homomorphism $\overline{\xi}\colon \Ker\Phi\rightarrow S_m$. Clearly,  
$(\Ker\Phi)^0$ is contained in $\mathrm{Ker}\,\overline{\xi}$. Take~$\varphi\in \mathrm{Ker}\,\overline{\xi}$, then for every isolated $\widehat{T}$-semi-invariant~$f$, we have~$\varphi(f)=\alpha f$, where $\alpha\in \KK[Y]^\times$. Also, $\varphi$ does not change the invertible functions. Thus, we get the embedding $\mathrm{Ker}\,\overline{\xi}\hookrightarrow (\KK[Y]^\times)^m$. By~\cite{R}, the group~$\KK[Y]^\times/ \KK^\times$ is a free abelian group with finite number of generators. Therefore, the neutral component of $(\KK[Y]^\times)^m$ is $(\KK^\times)^m$. Hence, the neutral component of~$\mathrm{Ker}\,\overline{\xi}$ is contained in $(\KK^\times)^m$, i.e., it is a torus. This gives the result.

$(ii)$. If $X$ does not admit non-constant invertible functions, then $Z$ is a point, hence $N(\widehat{T})=\Ker\Phi$.  As above $\mathrm{Ker}\,\overline{\xi}\hookrightarrow (\KK[Y]^\times)^m=(\KK^\times)^m$. Thus, $\mathrm{Ker}\, \xi=\mathrm{Ker}\,\overline{\xi}$ is a finite extension of the torus. Therefore, $N(\widehat{T})=\Ker\Phi$ is a finite extension of the torus.
\end{proof}

We can use Theorem~\ref{isol} to investigate $N(T)$ in the case when $\KK[X]$ is an integer extension of $\mathcal{A}$.

\begin{cor}\label{cledct}
Suppose $\KK[X]$ is an integer extension of $\mathcal{A}$. Then
\begin{enumerate}
    \item[$(i)$] The neutral component $N(T)^0$ is a torus. 
    \item[$(ii)$] If $X$ does not admit non-constant invertible functions, then $N(T)$ is a finite extension of a torus.  
\end{enumerate}
\end{cor}
\begin{proof}
Since $\KK[X]$ is an integer extension of $\mathcal{A}=\KK[Y]$, the kernel of~$\Psi$ is a finite group. Therefore, $N(T)$ is a finite extension of $\mathrm{Im}\,\Psi\subseteq N(\widehat{T})$.
\end{proof}
\begin{re}
One can explicitly find the maximal torus $\widetilde{T}$ in $N(\widehat{T})$. Let us choose generators $f_1,\ldots, f_k$ of $\mathcal{A}$ such that each $f_j$ is an invertible function or an isolated semi-invariant. Then $\mathcal{A}=\KK[f_1,\ldots, f_k]/I$. The torus $\widetilde{T}$ is the stabilizer $I$ in the $k$-dimensional torus acting on each $f_i$ by multiplication by a constant. It is often possible to find $N(T)$ explicitly using this idea.
\end{re}
\begin{ex}
Let $X= \mathbb{V}(ab-cd-1) \subseteq \mathbb{A}^4$, $X\cong\mathrm{SL}(2, \KK)$. And let $T\cong(\KK^\times)^2$ act on $X$ by the rule
$$
(t_1,t_2)\cdot (a,b,c,d)=(t_1a, \, t_1^{-1}b, \, t_2 c, \, t_2^{-1}d).
$$
The functions $a,b,c$ and $d$ form a system of semi-invariant generators $\KK[X]$ with weights 
$$\omega_a=(1,0), \quad \omega_b=(-1,0), \quad \omega_c=(0,1), \quad \omega_d=(0,-1).$$
Using the functions 
$$\alpha_a\colon (x,y)\mapsto x, \; \alpha_b\colon (x,y)\mapsto -x, \; \alpha_c\colon (x,y)\mapsto y, \; \alpha_d\colon (x,y)\mapsto -y,$$
by Proposition~\ref{dsi} we deduce that $a,b,c$ and $d$ are isolated semi-invariants. In this case, $Y=X$ and we have $N(T)=N(\widehat{T})$. Since $X$ does not admit non-constant invertible functions, by Theorem~\ref{isol} the group $N(T)$ is a finite extension of the torus. It is easy to show that $T$ is a maximal torus in $\Aut(X)$ and $\Aut(X)\cong \mathrm{D}_4 \rtimes T$, where $\mathrm{D}_4$ is a dihedral group.
\hfill $\diamond$ 
\end{ex}
\begin{ex}
Let $X=\mathbb{V}(x_1x_2x_3x_4-y^2)\subseteq \mathbb{A}^{5}$. Consider $T\cong(\KK^\times)^3$ acting on~$X$ by the rule
$$
(t_1,t_2,t_3)\cdot (x_1,x_2,x_3,x_4,y)
=(t_1t_2x_1,\, t_1t_2^{-1}x_2,\, t_1t_3x_3,\, t_1t_3^{-1}x_4,\, t_1^2y).
$$
The functions $x_1,x_2,x_3,x_4$ and $y$ form a system of semi-invariant generators of~$\KK[X]$ with weights 
\begin{align*}
\omega_{y}=(2,0,0), \quad \omega_{x_1}&=(1,1,0), \quad \omega_{x_2}=(1,-1,0), \\ 
 \omega_{x_3}&=(1,0,1), \quad \omega_{x_4}=(1,0,-1). 
\end{align*}
Using the functions 
\begin{align*}
\alpha_1\colon (x,y,z) & \mapsto y, \quad \alpha_2\colon (x,y,z)\mapsto -y,  \\
\alpha_3\colon (x,y,z) & \mapsto z, \quad \alpha_4\colon (x,y,z)\mapsto -z,
\end{align*}
it follows from Proposition~\ref{dsi} that $x_1,x_2,x_3$ and~$x_4$ are isolated semi-invariants. Since $\KK[X]$ is an integer extension of $\KK[x_1,x_2,x_3,x_4]$, by Theorem~\ref{isol} we have that $N(T)$ is a finite extension of the torus. It can be shown that $N(T)\cong\mathrm{S}_4\rtimes \mathbb{T}$, where $\mathrm{S}_4$ acts by permuting $\{x_1,x_2,x_3,x_4\}$, and $\mathbb{T}\cong(\KK^\times)^4$ acts by 
$$
(t_1,t_2,t_3,t_4)\cdot (x_1,x_2,x_3,x_4,y)= (t_1^2x_1,\, t_2^2x_2,\, t_3^2x_3,\, t_4^2x_4,\, t_1t_2t_3t_4y).
$$
This example shows that $T$ can be a non-maximal torus in $N(T)$.
\hfill $\diamond$ 
\end{ex}
Note that if $X$ is rigid, then, as we mentioned above, there exists a unique algebraic torus $T$ in $\Aut(X)$. Since $T$ is unique, it is normal. Hence, $N(T)$ equals to the whole group~$\Aut(X)$. Thus, we obtain the following corollary.

\begin{cor}\label{nmmn}
Let $X$ be a rigid variety and let $T$ be the maximal torus in $\Aut(X)$. Suppose that $\KK[X]$ is an integer extension of the subalgebra $\mathcal{A}$ generated by all invertible functions and isolated $T$-semi-invariants in~$\KK[X]$. Then 
\begin{enumerate}
    \item[$(i)$] The neutral component $\Aut(X)^0$ coincides with $T$. 
    \item[$(ii)$] If $X$ does not admit non-constant invertible functions, then $\Aut(X)$ is a finite extension of $T$.  
\end{enumerate}
\end{cor}

\section{Rigidity of $m$-suspensions}

Let us fix a tuple of positive integers $(k_1,\ldots,k_n)$. A natural question to ask is when an $m$-suspension $W=\mathrm{Susp}(X,f,k_1,\ldots,k_m)$ over a variety $X$ is rigid. 
In this section, we investigate for which tuples of numbers $(k_1,\ldots, k_n)$ the $m$-suspension $W=\mathrm{Susp}(X,f,k_1,\ldots,k_m)$ is rigid for every rigid variety $X$ and for every $f\in \KK[X]$. 
\begin{re}
By definition, $f$ is a non-constant function. But throughout the whole paper, all the statements and proofs are true for the case when $f$ is a constant.
\end{re}
\subsection{Roots adjoining}
In this subsection, we study the case of adjoining a root of a regular function of a rigid variety. This is the case of $1$-suspension. Our goal is to prove the following proposition.

\begin{prop}\label{dk}
Let $n\in \ZZ, n\geq 2$. There exist a rigid variety $X$ and a~regular function $f\in\KK[X]$ such that the algebra $\mathcal{C}=\KK[X][y]/\langle y^n-f\rangle$ is not rigid.
\end{prop}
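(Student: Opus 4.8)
The plan is to exhibit one explicit pair $(X,f)$ and to split the verification into two independent tasks: producing a nonzero LND on the extended algebra $\mathcal{A}=\KK[X][y]/(y^n-f)$, and showing that $\KK[X]$ admits none. Both tasks are streamlined by the homogeneity reductions above: by Corollary~\ref{flza} it suffices to exhibit a single $T$-homogeneous LND on $\mathcal{A}$ for a suitable torus action, and, for rigidity of $X$, to rule out $T$-homogeneous LNDs on $\KK[X]$. I would therefore equip $X$ with a $\KK^\times$-action (a $\ZZ$-grading) and choose a homogeneous $f$. On the cover the grading must refine, since $y$ acquires weight $(\deg f)/n$; rescaling all weights by $n$ makes $\mathcal{A}$ a $\ZZ$-graded algebra in which $\KK[X]$ occupies only the sublattice $n\ZZ$ of weights. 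The point to exploit is that $\mathcal{A}$ may carry homogeneous LNDs of weight $\not\equiv 0 \pmod n$, which cannot descend from $\KK[X]$; this is exactly where the arithmetic condition $\gcd=n\neq1$ (rather than a first power) enters.

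The engine of the construction is the following model derivation, which explains why extracting an $n$-th root creates an additive action. Suppose $\KK[X]$ contains elements $p,q$ with $f=pq$, and define $D$ on $\mathcal{A}$ by $D(p)=0$, $D(q)=ny^{n-1}$, $D(y)=p$. Then $D$ respects the defining relation, since $D(y^n-pq)=ny^{n-1}p-p\cdot ny^{n-1}=0$, and $D$ is locally nilpotent: $p$ and $y$ span a $D$-stable subspace with $D^2(y)=0$, while $D^{k}(q)=n(n-1)\cdots(n-k+1)\,y^{\,n-k}p^{\,k-1}$, which vanishes for $k=n+1$. The key feature is that $D(q)=ny^{n-1}$ genuinely involves $y$, so $D$ does \emph{not} preserve $\KK[X]$; this is compatible with $\KK[X]$ being rigid, and it is precisely the identity $D(y^n)=ny^{n-1}D(y)$ that would collapse for a first power $y^1=f$, so that $n\geq2$ is essential. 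With $\KK[X]=\KK[p,q]$ this realizes $\mathcal{A}$ as the surface $\{y^n=pq\}$, but here the base $X=\AA^2$ is not rigid, so the whole problem is to keep this derivation alive over a \emph{rigid} base.

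The remaining, and principal, difficulty is to replace $\AA^2$ by a rigid $X$ without destroying $D$. I would enrich $\KK[p,q]$ to an algebra $\KK[X]$ by adjoining further generators subject to relations chosen among elements of $\ker D$ (functions of $p$ and the new generators, all assigned $D=0$), letting $q$ enter only through the product $f=pq$, so that applying $D$ to each defining relation returns $0$ identically and $D$ extends to a well-defined LND. The tension to resolve is structural and is where I expect the real work to lie: a nonzero LND of $\mathcal{A}$ must move some regular function of $X$ (here $D(q)\neq0$), yet if $q$ were a free polynomial coordinate then $X$ would contain an $\AA^1$-factor and fail to be rigid, while binding $q$ by a relation risks forcing $D$ to divide by that relation — exactly the obstruction one meets when trying to run this over rigid models such as $\KK[s^2,s^3,z^{\pm1}]$, whose cuspidal and invertible coordinates are forced into the kernel of any LND. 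I would navigate this by designing $X$ so that $q$ is bound only through $f$ and all other relations lie in $\ker D$, and the hardest single step will then be verifying rigidity of the resulting $X$: after fixing the grading one must show, by a degree/weight argument in the spirit of Lemma~\ref{fl} and the growth of $\nu_\partial$, that every $T$-homogeneous locally nilpotent derivation of $\KK[X]$ vanishes, the positive- and negative-weight parts being excluded by the shape of the monoid of occurring weights.
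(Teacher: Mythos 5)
There is a genuine gap: the proposal stops exactly where the proposition begins. The model derivation $D(p)=0$, $D(q)=ny^{n-1}$, $D(y)=p$ on $\{y^n=pq\}$ is correct and is indeed the standard suspension-type LND, but the entire content of the statement is the \emph{existence} of a rigid $X$ carrying an $f$ for which such a construction survives, and you never produce one. You correctly identify the tension (a free coordinate $q$ kills rigidity of $X$, while binding $q$ by a relation threatens to kill $D$), announce that you ``would navigate'' it and that ``the hardest single step'' is verifying rigidity of the resulting $X$ --- and then leave both the design of $X$ and the rigidity verification undone. A proof of an existence statement that defers the construction of the object is not a proof.

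It is worth seeing how the paper escapes the very tension you describe, because the mechanism is different from the one you propose. You look for a factorization $f=pq$ already inside $\KK[X]$; the paper instead arranges for the relevant function to factor only \emph{after} the root is adjoined. Concretely, for a prime $p\mid n$ it takes $F=\prod_{i=1}^{p}\bigl(x_0+\varepsilon_i x_1y+\cdots+\varepsilon_i^{p-1}x_{p-1}y^{p-1}\bigr)=z^2$ with $\varepsilon_i$ the $p$-th roots of unity; the symmetric-function identities $\sigma_1=\cdots=\sigma_{p-1}=0$ force $F(x,y)=G(x,y^p)$, so setting $s=y^p$ yields a variety $X$ (with $sw^p=1$) on which $G=z^2$ does \emph{not} split, while on the cover the splitting $L_1\cdot\prod_{i\geq 2}L_i=z^2$ supports a Danielewski-type LND with $\partial(y)=0$. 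Rigidity of $X$ is then proved by an explicit $\ZZ$-graded case analysis (even versus odd degree of a homogeneous LND, using $\nu_\rho$ and factorial closedness of the kernel), and the case $p=2$ is delegated to Freudenburg--Moser-Jauslin. None of this is recoverable from the proposal as written; to complete your route you would have to supply a concrete rigid $X$ and a full rigidity proof, which is precisely the work the paper does.
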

\begin{proof}
Consider a prime divisor $p$ of $n$. First assume that $p\geq 3$. Let us denote by~$\varepsilon_1,\ldots,\varepsilon_p\in\KK$ all roots of unity of degree $p$. In $(p+3)$-dimensional affine space, consider a variety $V$ given by the following two equations:
\begin{align*}
\prod_{i=1}^p\left(x_0+\varepsilon_i x_1y+\varepsilon_i^2 x_2y^2+\ldots+\varepsilon_i^{p-1} x_{p-1}y^{p-1}\right)  =z^2 \; \text{ and } \;
y^{3p}+w^3  =1.
\end{align*}
Let us denote the left-hand side of the first equation by 
$$F(x_0,\ldots,x_{p-1},y):=\sum f_{i_0,\ldots,i_{p-1},j} \cdot x_0^{i_0}\cdots x_{p-1}^{i_{p-1}} \cdot y^j.$$ 
Each coefficient in $f_{i_0,\ldots,i_{p-1},j}$ is a homogeneous symmetric polynomial in $\varepsilon_1,\ldots,\varepsilon_p$ of degree $j$ with integer coefficients. Each symmetric polynomial is a polynomial in elementary symmetric polynomials $\sigma_1,\ldots,\sigma_p$. It's easy to see that 
\begin{align*}
\sigma_1(\varepsilon_1,\ldots,\varepsilon_p) & =0, \; \ldots, \; \sigma_{p-1}(\varepsilon_1,\ldots,\varepsilon_p)=0, \\  
\sigma_{p}(\varepsilon_1,\ldots,\varepsilon_p) & =(-1)^{p+1}=1.
\end{align*}
Therefore, if $j$ is not a multiple of $p$, then $f_{i_0,\ldots,i_{p-1},j}=0$. That is, $$F(x_0,\ldots,x_{p-1},y)=G(x_0,\ldots,x_{p-1},y^p).$$

Let us prove that the variety $V$ is not rigid. For this purpose we define a nonzero locally nilpotent derivation $\partial$ by the rule 
$$
\begin{cases}
\partial(y)=\partial(w)=0,\\
\partial(z)=y^{p-1}\prod_{i=2}^p \left(x_0+\varepsilon_i x_1y+\varepsilon_i^2 x_2y^2+\ldots+\varepsilon_i^{p-1} x_{p-1}y^{p-1}\right),\\
\partial\left(x_0+\varepsilon_1 x_1y+\varepsilon_1^2 x_2y^2+\ldots+\varepsilon_1^{p-1} x_{p-1}y^{p-1}\right)=2zy^{p-1},\\
\partial\left(x_0+\varepsilon_i x_1y+\varepsilon_i^2 x_2y^2+\ldots+\varepsilon_i^{p-1} x_{p-1}y^{p-1}\right)=0 \text{ for } i\geq 2.
\end{cases}
$$
These conditions give linear equations on $\partial(x_0), \partial(x_1)y, \ldots, \partial(x_{p-1})y^{p-1}.$ The determinant of this system is 
$$
\begin{vmatrix}
1&\varepsilon_1&\ldots&\varepsilon_1^{p-1}\\
\vdots&\vdots&\vdots&\vdots\\
1&\varepsilon_p&\ldots&\varepsilon_p^{p-1}
\end{vmatrix}\neq 0.
$$
Hence, these conditions admit a unique solution $\partial\neq 0$.
It is easy to see that 
$$\partial(F)=\partial(z^2)\ \text{ and }  \ \partial(y^{3p}+w^3)=\partial(1)=0.$$ 
That is, $\partial$ is a derivation on $\KK[V]$.     
One can also check that $\partial^2(z)=0$ and $\partial^3\left(x_0+\varepsilon_1 x_1y+\varepsilon_1^2 x_2y^2+\ldots+\varepsilon_1^{p-1} x_{p-1}y^{p-1}\right)=0$. This implies that~$\partial$ is an LND. 

If we set $y=u^{\frac{n}{p}}$, we obtain a new variety $W$. Since $\partial(y)=0$, the derivation $\partial$ induces a nonzero locally nilpotent derivation on $\KK[W]$. 

Now let us put $s=y^p=u^n$. We obtain a new variety $X$ given by 
$$
G(x_0,\ldots,x_{p-1},s)=z^2 \ \text{ and } \ 
s^3+w^3=1.
$$
Let us prove that $X$ is rigid. Suppose that there exists a locally nilpotent derivation~$\delta\neq 0$ on $\KK[X]$. 
We define a $\mathbb{Z}$-grading on $\KK[X]$ by setting $\deg(x_i)=2$, $\deg(z)=p$, $\deg(s)=\deg(w)=0$. Since $\KK[X]$ admits a nonzero locally nilpotent derivation, it admits a nonzero homogeneous locally nilpotent derivation $\rho$, see Corollary~\ref{flz}. 
Since $\rho(s^3+w^3)=0$, we get $\rho(s)=\rho(w)=0$, see \cite[Theorem~2.47]{Fr}. Hence, $\rho$ induces an LND on $\KK[V]$, which we will also denote by $\rho$. 

If $\deg(\rho)$ is even, then $z\mid \rho(z)$. Therefore, $\rho(z)=0$. But $\mathrm{Ker}\,\rho$ is factorially closed, cf. \cite[Principle~1]{Fr}. So for every $i$, we have
$$\rho\left(x_0+\varepsilon_i x_1y+\varepsilon_i^2 x_2y^2+\ldots+\varepsilon_i^{p-1} x_{p-1}y^{p-1}\right)=0.$$ 
It follows that $\rho=0$. This gives a contradiction.

Now let $\deg(\rho)$ be odd. Then $z\mid\rho^2(z)$, that is $\rho^2(z)=0$. Hence, if $\rho(z)\neq 0$, then $\nu_\rho(z)=1$. Therefore $\nu_\rho(F)=\nu_\rho(z^2)=2$. 
Since 
$$\deg\left(x_0+\varepsilon_i x_1y+\varepsilon_i^2 x_2y^2+\ldots+\varepsilon_i^{p-1} x_{p-1}y^{p-1}\right)=2$$ 
and $2\nmid\deg(\rho)$, we have $$z\mid \rho\left(x_0+\varepsilon_i x_1y+\varepsilon_i^2 x_2y^2+\ldots+\varepsilon_i^{p-1} x_{p-1}y^{p-1}\right).$$
 That is,
 \begin{align*}
   \text{either} \quad \nu_\rho\left(x_0+\varepsilon_i x_1y+\varepsilon_i^2 x_2y^2+\ldots+\varepsilon_i^{p-1} x_{p-1}y^{p-1}\right) & =0  \\
   \text{or} \quad \nu_\rho\left(x_0+\varepsilon_i x_1y+\varepsilon_i^2 x_2y^2+\ldots+\varepsilon_i^{p-1} x_{p-1}y^{p-1}\right)  & \geq 2.
 \end{align*}
But $\nu_\rho(F)=2$. 
 Therefore, there is $k$ such that 
 $$\nu_\rho\left(x_0+\varepsilon_k x_1y+\varepsilon_k^2 x_2y^2+\ldots+\varepsilon_k^{p-1} x_{p-1}y^{p-1}\right)=2$$
 and for all $i\neq k$ we have 
$$\rho\left(x_0+\varepsilon_i x_1y+\varepsilon_i^2 x_2y^2+\ldots+\varepsilon_i^{p-1} x_{p-1}y^{p-1}\right)=0.$$
Let us consider the conditions 
$$
\begin{cases}
\rho\left(x_0+\varepsilon_i x_1y+\varepsilon_i^2 x_2y^2+\ldots+\varepsilon_i^{p-1} x_{p-1}y^{p-1}\right)=0 \ \text{ for } \ i\neq k,\\
\rho\left(x_0+\varepsilon_k x_1y+\varepsilon_k^2 x_2y^2+\ldots+\varepsilon_k^{p-1} x_{p-1}y^{p-1}\right)=h\in\KK[V]
\end{cases}
$$
as a system of linear equations on $\rho(x_0), \rho(x_1)y, \ldots, \rho(x_{p-1})y^{p-1}$ with the determinant of this system equal to 
$$
\begin{vmatrix}
1&\varepsilon_1&\ldots&\varepsilon_1^{p-1}\\
\vdots&\vdots&\vdots&\vdots\\
1&\varepsilon_p&\ldots&\varepsilon_p^{p-1}
\end{vmatrix}\neq 0.
$$
Let us solve this system using Cramer's rule. We see that $\rho(x_j)y^j=\lambda_j h$ for some~$\lambda\in \KK$. Suppose $\lambda_l\neq 0$. Then 
$$\rho(x_j)=\frac{\lambda_j h}{y^j}=\frac{\lambda_j \lambda_l h}{\lambda_l y^j}=\frac{\lambda_j y^l\partial(x_l)}{\lambda_l y^j}=\frac{\lambda_j}{\lambda_l}y^{l-j}\rho(x_l).$$
But $\rho(x_j), \rho(x_l)\in\KK[X]$ and $y^{l-j}\notin \KK(X)$ for $l\neq j$. This implies $\rho(x_j)=0$ for each~$j\neq l$.
Since for $i\neq k$ we have 
$$\rho\left(x_0+\varepsilon_i x_1y+\varepsilon_i^2 x_2y^2+\ldots+\varepsilon_i^{p-1} x_{p-1}y^{p-1}\right)=0,$$ 
we obtain $\rho=0$, which gives a contradiction.

We proved that $X$ is a rigid variety. But the algebra $\KK[W]$ can be obtained from the algebra $\KK[X]$ by adjoining a root of $s$ of degree $n$. And $W$ is not rigid. This proves the proposition in the case $p \geq 3$.

We now consider the case $p=2$, i.e., $n=2k$. By \cite[Theorem~9.2]{FMJ}, the variety~$X= \mathbb{V}(x^2+y^2s^3+z^3)$ is rigid. However, by \cite[Remark~9.1]{FMJ}, the variety~$W=\mathbb{V}(x^2+y^2u^{6k}+z^3)$ is not rigid. We refer to \cite[Theorem~2]{G} in both~cases.
\end{proof}

\subsection{General case of $m$-suspensions}

In this subsection, we obtain a criterion for an $m$-suspension $\mathrm{Susp}(X,f,k_1,\ldots,k_m)$ to be rigid for each $X$ and each $f$. 

We need the following lemma, see \cite[Lemma~11]{G}.
\begin{lem}\label{nod}
 Assume an $m$-suspension $W=\mathrm{Susp}(X,f,k_1,\ldots,k_m)$ is not rigid. Let $d=\gcd(k_1, \ldots, k_m)$. Then $V=\mathrm{Susp}(X,f,d)$ is not rigid.
\end{lem}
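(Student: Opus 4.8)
The plan is to exploit the torus hidden in the product that defines an $m$-suspension. Write $k_i=d\,l_i$ with $\gcd(l_1,\dots,l_m)=1$, and put $t:=\prod_{i=1}^m y_i^{l_i}$, so that $t^d=\prod_i y_i^{k_i}=f$. The algebra $\KK[Y]=\KK[X][y_1,\dots,y_m]/\bigl(\prod_i y_i^{k_i}-f\bigr)$ carries the action of the torus $T=\{(s_1,\dots,s_m)\in(\KK^\times)^m:\prod_i s_i^{k_i}=1\}\cong(\KK^\times)^{m-1}$ given by $y_i\mapsto s_iy_i$, which fixes $\KK[X]$ pointwise; equivalently $\KK[Y]$ is graded by the free part $\ZZ^{m-1}$ of $M=\ZZ^m/\ZZ(k_1,\dots,k_m)$, with $\KK[X]$ in degree $0$. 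A monomial $y^a$ has degree $0$ iff $a\in\ZZ\cdot(l_1,\dots,l_m)$, whence $\KK[Y]^T=\KK[X][t]=\KK[Z]$ with $t^d=f$. Thus $Z=Y/\!/T$ is the good quotient of $Y$ by $T$, and it suffices to produce a nonzero locally nilpotent derivation on the invariant ring $\KK[Y]^T$.

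First I would pass to a homogeneous derivation. As $Y$ is not rigid, $\KK[Y]$ has a nonzero LND, and by Corollary~\ref{flza} applied to the $\ZZ^{m-1}$-grading it has a nonzero $T$-homogeneous LND $\partial$, of some degree $\chi\in\ZZ^{m-1}$. The key structural point is that $T$ acts with generically free orbits that are honest tori: over $f\neq0$ every $y_i$ is invertible, so the fibre of $Y\to Z$ through such a point is a single $T$-orbit isomorphic to $(\KK^\times)^{m-1}$. Consequently there is no nonzero \emph{vertical} LND, i.e.\ none with $\KK[Z]=\KK[Y]^T\subset\Ker\partial$: such a $\partial$ is $\KK[Z]$-linear and integrates to a $\GA$-action fixing $Z$ and preserving each generic fibre, hence to a morphism $\GA\to\Aut\bigl((\KK^\times)^{m-1}\bigr)$ landing in the connected automorphism group $(\KK^\times)^{m-1}$; since $\homo(\GA,\KK^\times)=0$ this is trivial, forcing $\partial=0$. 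In particular, if the homogeneous $\partial$ has degree $\chi=0$ it preserves $\KK[Y]^T=\KK[Z]$, its restriction is again an LND, and by vertical-triviality this restriction is nonzero, so $Z$ is not rigid and we are done.

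It remains to remove the degree. If $\chi\neq0$, I would shrink the torus: pick a codimension-one subtorus $T'\subset T$ on which the character $\chi$ vanishes; then $\partial$ is $T'$-homogeneous of degree $0$, the fibres of $Y\to Y/\!/T'$ are again generically free $T'$-orbits, so the same argument shows the restriction of $\partial$ to $W:=\KK[Y]^{T'}$ is a nonzero LND, while $Z=W/\!/(T/T')$ with $T/T'\cong\KK^\times$. This reduces everything to a single $\KK^\times$-quotient $W\to Z$ carrying a $\KK^\times$-homogeneous LND of \emph{nonzero} degree $e$ (a ``horizontal'' action) — the one situation in which the kernel of the weight is finite rather than a subtorus and the derivation genuinely fails to descend. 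I expect this horizontal step to be the main obstacle: it is exactly where the torsion $\ZZ/d$ of $M$ re-enters, i.e.\ the residual $\mu_d$-action $t\mapsto\zeta t$ on $Z$ with $Z/\!/\mu_d=X$, and resolving it requires the local structure of the $\KK^\times$-bundle $W\to Z$ together with the explicit form $\partial(t)=\bigl(\sum_i l_i\,\partial(y_i)/y_i\bigr)\,t$ of the derivation on the distinguished element $t$, in the spirit of the classification of $\GA$-actions on $\TT$-varieties. Everything else — the identification $Z=Y/\!/T$, the reduction to a homogeneous derivation, and the vertical-triviality that settles the degree-zero case — is routine.
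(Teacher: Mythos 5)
Note first that the paper does not actually prove this lemma: it is quoted from \cite[Lemma~11]{G}, so there is no internal argument to compare against. Judged on its own terms, your proposal is incomplete. The parts you do carry out are sound: the identification $\KK[Y]^{T}=\KK[X][t]/(t^d-f)=\KK[Z]$ for the connected torus $T$ (your $\{s:\prod s_i^{k_i}=1\}$ is really $(\KK^\times)^{m-1}\times\mu_d$, so one must pass to the identity component, but you implicitly do), the reduction to a $T$-homogeneous LND via Corollary~\ref{flza}, and the ``no vertical LND'' claim. For the latter there is in fact a one-line algebraic argument you could substitute for the orbit-geometric one: if $\KK[Z]\subset\Ker\partial$ then $f=\prod y_i^{k_i}\in\Ker\partial$, and since kernels of LNDs are factorially closed each $y_i\in\Ker\partial$, so $\partial$ kills a generating set. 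With that, the degree-zero case is settled.

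The genuine gap is the horizontal case $\chi\neq 0$, which you explicitly leave open (``I expect this horizontal step to be the main obstacle\dots''). This is not a technical loose end but the heart of the lemma: already for the ordinary suspension $\{y_1y_2=f(x)\}$ the standard locally nilpotent derivation $y_1\partial_x+f'(x)\partial_{y_2}$ is homogeneous of degree $+1$ for the torus $(\lambda y_1,\lambda^{-1}y_2)$, so the nonzero-degree case is the generic one, and a degree-nonzero homogeneous $\partial$ does \emph{not} preserve $\KK[Y]^{T}$, so no restriction argument is available. Your reduction to a rank-one quotient $W\to Z$ carrying a horizontal $\KK^\times$-homogeneous LND does not by itself yield anything, because the abstract statement ``a variety with a $\KK^\times$-action and a compatible horizontal $\GA$-action has non-rigid quotient'' is false in general (quotients of non-rigid varieties are frequently rigid); one must use the specific relation $y^{l}=t$ and the fact that $t$ generates $\KK[Z]$ over $\KK[X]$ to manufacture an LND of $\KK[Z]$ from the data $\partial(y_1),\dots,\partial(y_m)$. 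Since that construction is exactly what \cite[Lemma~11]{G} supplies and it is absent here, the proposal does not establish the lemma.
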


\begin{theor}\label{perv}
Fix $m$ positive integers $k_1,\ldots, k_m$ and let $d=\gcd(k_1, \ldots, k_m)$. 
\begin{enumerate}
    \item[$(i)$] If $d=1$, then for every rigid variety $X$ the variety $W=\mathrm{Susp}(X,f,k_1,\ldots,k_m)$ is also rigid.
    \item[$(ii)$] If $d>1$, then there exists such  rigid variety $X$ that $W=\mathrm{Susp}(X,f,k_1,\ldots,k_m)$ is not rigid.
\end{enumerate}
\end{theor}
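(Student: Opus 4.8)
The plan is to treat Lemma~\ref{nod} and Proposition~\ref{dk} as black boxes and to split the argument along the dichotomy $d=1$ versus $d>1$.

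Part~1 is almost immediate. The $1$-suspension is trivial, since
$$\mathrm{Susp}(X,f,1)=\spec\bigl(\KK[X][y]/(y-f)\bigr)\cong X$$
(the relation $y=f$ eliminates $y$), so $\mathrm{Susp}(X,f,1)$ is rigid whenever $X$ is. Hence, if $d=1$ and $Y=\mathrm{Susp}(X,f,k_1,\ldots,k_m)$ failed to be rigid, Lemma~\ref{nod} would make $\mathrm{Susp}(X,f,d)=\mathrm{Susp}(X,f,1)\cong X$ non-rigid, contradicting the rigidity of $X$. Thus $Y$ is rigid, which is statement~1).

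For Part~2 I would first reduce to the single-root case. Fix a prime $p\mid d$, so that $p\mid k_i$ for all $i$, and write $k_i=dm_i$ with $\gcd(m_1,\ldots,m_m)=1$. Applying Proposition~\ref{dk} with $n=d$ produces a rigid variety $X$ and a function $f\in\KK[X]$ for which $Z=\mathrm{Susp}(X,f,d)$, i.e. $\KK[X][\omega]/(\omega^d-f)$, is not rigid; moreover the locally nilpotent derivation $\delta$ constructed there fixes the adjoined root, $\delta(\omega)=0$, whence $\delta(f)=0$. It remains to upgrade this to the genuine $m$-suspension with the same $X$ and $f$. A natural device is the common cover
$$W=\bigl\{\,(x,y_1,\ldots,y_m,\omega):\ \textstyle\sum_i y_i^{k_i}=f,\ \omega^d=f\,\bigr\},$$
so that $\KK[W]=\KK[Z][y_1,\ldots,y_m]/(\sum_i y_i^{k_i}-f)=\KK[Y][\omega]/(\omega^d-f)$ and $\KK[Y]=\KK[W]^{\mu_d}$ for the action $\omega\mapsto\zeta\omega$. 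Since $\delta(f)=0$, extending $\delta$ by $\tilde\delta(y_i)=0$ gives a well-defined nonzero locally nilpotent derivation $\tilde\delta$ of $\KK[W]$: on the free $\KK[Z]$-basis $\{\,y^\alpha:0\le\alpha_1<k_1\,\}$ it acts through the coefficients alone, so local nilpotence is inherited from $\delta$.

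The main obstacle, which I expect to be the heart of the proof, is descending a locally nilpotent derivation from $W$ to $Y=W/\mu_d$. There is a genuine rigidity-theoretic obstruction here: if a $\mu_d$-homogeneous derivation $\partial$ of $\KK[Z]$ of weight $e$ satisfies $\partial(\omega)=0$, then necessarily $\partial|_{\KK[X]}=\omega^e D$ for a derivation $D$ of $\KK[X]$, and $\partial^k|_{\KK[X]}=\omega^{ke}D^k$, so $\partial$ is locally nilpotent if and only if $D$ is; as $X$ is rigid this forces $D=0$ and $\partial=0$. Consequently the non-rigidity of $Z$ is witnessed only by \emph{inhomogeneous} derivations, and the naive descent — multiply the weight-$e$ piece by $\omega^{d-e}$ to obtain a weight-zero, hence $\mu_d$-invariant, derivation preserving $\KK[Y]$ — has no homogeneous input to act on. To overcome this I would build the derivation on $\KK[Y]$ directly, adapting the \emph{norm-type} derivation of Proposition~\ref{dk}: the hypothesis $p\mid k_i$ makes each monomial $y_i^{k_i}=(y_i^{k_i/p})^p$ a $p$-th power, so that the suspension relation itself reintroduces the $p$-th-root data on which that derivation rests; the work is to assemble these into a single nonzero locally nilpotent derivation of $\KK[Y]$ and to verify, by the factorially-closed-kernel and Vandermonde arguments of Proposition~\ref{dk}, that $X$ remains rigid. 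The case $p=2$ is handled separately, exactly as in the $p=2$ branch of Proposition~\ref{dk}. The outcome is consistent with Lemma~\ref{nod}, whose necessary condition — non-rigidity of the $d$-suspension — is precisely what Proposition~\ref{dk} supplies.
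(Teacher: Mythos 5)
Part~1) of your argument is correct and coincides with the paper's: $\mathrm{Susp}(X,f,1)\cong X$, so Lemma~\ref{nod} applied contrapositively gives rigidity of $Y$.

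Part~2) contains a genuine gap, and it stems from a misreading of the definition of $m$-suspension. You take the defining equation to be $\sum_i y_i^{k_i}=f$, but the paper's $m$-suspension is cut out by the \emph{product} $y_1^{k_1}\cdots y_m^{k_m}=f$ (the displayed torus action and the identification with the classical suspension $uv=f$ make this unambiguous, despite the typo in Definition~1). This matters decisively. With the product, the element $y:=y_1^{k_1/d}\cdots y_m^{k_m/d}\in\KK[Y]$ satisfies $y^d=f$, so $\KK[Z]=\KK[X][y]/(y^d-f)$ embeds into $\KK[Y]$ and in fact $Y=\mathrm{Susp}(Z,y,k_1/d,\ldots,k_m/d)$; the locally nilpotent derivation $\partial$ of $\KK[Z]$ from Proposition~\ref{dk}, which satisfies $\partial(y)=0$, then extends to a nonzero locally nilpotent derivation of $\KK[Y]$ simply by setting $\partial(y_i)=0$ for all $i$ (well-defined because $\partial$ kills the relation, locally nilpotent because $\KK[Y]$ is generated by $\KK[Z]$ and kernel elements). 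No descent along a $\mu_d$-quotient is needed. With your additive reading this identification fails ($(\sum_i y_i^{k_i/d})^d\neq\sum_i y_i^{k_i}$), which is why you are forced into the cover $W$ and the problem of pushing a derivation down to $\KK[W]^{\mu_d}$ --- a problem you correctly identify as the crux and then leave unsolved, deferring to a ``norm-type'' construction that is only sketched. As written, your Part~2) establishes non-rigidity of $W$, not of $Y$, so the proof is incomplete; the fix is to use the correct (multiplicative) definition, after which the paper's one-line extension argument closes the case.
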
 
\begin{proof}
If $d=1$, then  $V=\mathrm{Susp}(X,f,d)\cong X$.  Hence, $V$ is rigid. By Lemma~\ref{nod}, the variety $W$ is also rigid.

Let us consider the case $d>1$. By Proposition~\ref{dk}, there exists a rigid variety~$X$ such that $V=\mathrm{Susp}(X,f,d)$ is not rigid. Moreover,
$$V=\mathrm{Spec}(\KK[X][y]/\langle y^d-f\rangle)\subset \KK\times X,$$ and from the proof of Proposition~\ref{dk} there exists a locally nilpotent derivation $\partial$ on $\KK[V]$ such that $\partial(y)=0$. If we consider 
$$W=\mathrm{Susp}(X,f,k_1,\ldots,k_m)=\mathrm{Susp}(V,y,\frac{k_1}{d},\ldots,\frac{k_m}{d}),$$
then $\partial$ induces a nonzero LND on $\KK[W]$.
\end{proof}

\section{Automorphism group of a rigid $m$-suspension}

As before, let $W=\mathrm{Susp}(X,f,k_1,\ldots,k_m)$ and set $d=\gcd(k_1,\ldots, k_m)$. Then there is a natural action of an~$(m-1)$-dimensional torus $T\cong(\KK^\times)^{m-1}$ on it. If 
$$W=\mathrm{Spec}(\KK[X][y_1, \ldots, y_m]/\langle y_1^{k_1}\cdots y_m^{k_m}-f\rangle)\subset \KK^m\times X,$$ then $(t_1, \ldots, t_m) \cdot (y_1,\ldots,y_m)  =
  (t_1^{\frac{k_m}{d}}y_1, \ t_2^{\frac{k_m}{d}}y_2,\ \ldots,\ t_{m-1}^{\frac{k_m}{d}}y_{m-1}, 
\ t_1^{-\frac{k_1}{d}} \cdots t_{m-1}^{-\frac{k_{m-1}}{d}}y_m)$.
The action of the torus on elements of $\KK[X]$ is trivial.
We obtain that~$T\cong(\KK^\times)^{m-1}$ is a subtorus in the automorphism group $\Aut(W)$. 
\begin{lem}\label{lelem}
Let $m\geq 2$.
The functions
$y_1,\ldots, y_m$ are irreducible isolated $T$-semi-invariants. Moreover, each isolated $T$-semi-invariant has the form $\gamma \cdot  y_j$ for some~$\gamma\in\KK[W]^\times$ and $1\leq j\leq m$.
\end{lem}
\begin{proof}
Consider the system of generators $y_1,\ldots, y_m, \ z_1,\ldots, z_r$ of $\KK[W]$, where~$z_j$'s are  generators of $\KK[X]$. 
By definition, all functions in this system are $T$-semi-invariants. Let $e_1,\ldots, e_{m-1}$ be the standard basis in $\mathbb{Z}^{m-1}$. The weight of $y_i$ is~$\frac{k_m}{d}e_i$ for $1\leq i\leq m-1$. The weight of $y_m$ is~$\left(-\frac{k_1}{d},\ldots, -\frac{k_m}{d}\right)$. The weights of~$z_j$'s are equal to zero. 
We define 
\begin{align*}
    & \alpha_i(x_1,\ldots,x_{m-1})  =x_i \ \text{ for } \  1\leq i\leq m-1,\\
    & \alpha_m(x_1,\ldots,x_{m-1}) =-(x_1+\ldots+x_{m-1}).
\end{align*}
Using these functions, we obtain from Proposition~\ref{dsi}$(i)$ that all $y_i$'s are irreducible isolated $T$-semi-invariants. It also follows from Proposition~\ref{dsi}$(ii)$ that all irreducible isolated $T$-semi-invariants are of the form $\gamma \cdot y_j$, where $\gamma\in\KK[W]^\times$. 
\end{proof}

Let $X$ be a variety and $f\in\KK[X]$. Then let us denote by $\Aut(X,f)$ all automorphisms of $X$ such that $f$ is a semi-invariant for these automorphisms, i.e.,
$$
\Aut(X,f)=\{\varphi\in \Aut(X)\mid \varphi(f)=\lambda f, \ \lambda\in\KK^\times\}.
$$
Assume $k_1,\ldots,k_m$ are positive integers with $\gcd(k_1,\ldots,k_m)=1$. We define the embedding $\eta$ of $\Aut(X,f)$ into $\Aut(W)$, where $W=\mathrm{Susp}(X,f,k_1,\ldots,k_m)$. 

Suppose that $u_1,\ldots, u_k$ are such integers that $u_1k_1+\ldots+u_mk_m=1$. Now let~$\varphi\in \Aut(X,f)$, then $\varphi(f)=\lambda f$, $\lambda\in\KK^\times$. Let us define 
\begin{align*}
\eta(\varphi)(g) & =\varphi(g) \ \text{ for all }\  g\in \KK[X], \\
\eta(\varphi)(y_i) & =\lambda^{u_i}y_i \ \text{ for } \ i = 1, \ldots, m.
\end{align*}
We can easily see that $\eta(\Aut(X,f))$ commutes with the torus $T$ defined above. Thus, we get that $\eta(\Aut(X,f))\times T$ is isomorphic to $\Aut(X,f)\times T$ in $\Aut(W)$.

\begin{theor}\label{dva}
Let $X$ be rigid and $k_1,\ldots,k_m$ be positive with $\gcd(k_1,\ldots,k_m)=1$. For $W=\mathrm{Susp}(X,f,k_1,\ldots,k_m)$ we have 
\begin{enumerate}
    \item[$(i)$]  The neutral component $\Aut(W)^0$ is isomorphic to $\Aut(X,f)^0\times T$.
    \item[$(ii)$] When $\KK[X]^\times=\KK^\times$, then $\Aut(W)$ is a finite extension of $\eta(\Aut(X,f))\times T$.
\end{enumerate}
\end{theor}
\begin{proof}
$(i)$. By Theorem~\ref{perv}, $W$ is rigid. Hence, by \cite[Theorem 1]{AG}, there exists a unique maximal torus $\mathbb{T}$ in $\Aut(W)$. We have $T\subset  \mathbb{T}$. By Lemma~\ref{lelem}, each $y_i$ is an isolated irreducible $T$-semi-invariant. Since $\mathbb{T}$ is a commutative group, $y_i$ is an isolated irreducible $\mathbb{T}$-semi-invariant. By Lemma~\ref{finas}, the number~$r$ of association classes of isolated irreducible $\mathbb{T}$-semi-invariants is finite. Since every automorphism of $W$ permutes these classes, we obtain a homomorphism $\xi\colon \Aut(W)\rightarrow \mathrm{S}_r$.  Since~$S_r$ is a finite group, the neutral component $\Aut(W)^0$ is contained in the kernel of $\xi$. If $\varphi\in \Ker\xi$, then $\varphi$ multiplies all $\mathbb{T}$-semi-invariants by invertible functions. Each invertible function $h$ is not divisible by any $y_i$. Hence, the $T$-weight of $h$ is zero. Hence, $h$ belongs to the algebra of $T$-invariants $\KK[W]^T=\KK[X]$. It follows that $T$ is a normal subgroup of $\Ker\xi$. Therefore, $\KK[W]^T=\KK[X]$ is invariant under $\Ker\xi$, and hence $\KK[X]\subseteq \KK[W]$ is invariant under $\Aut(W)^0$. 

We obtain the homomorphism $\tau\colon\Ker\xi\rightarrow \left(\KK[X]^\times\right)^m$. Since the neutral component of $\left(\KK[X]^\times\right)^m$ is equal to $\left(\KK^\times\right)^m$, the image $\tau(\Aut(W)^0)$ acts on $y_i$ by multiplying each $y_i$ by a constant. Therefore, every $\psi\in \Aut(W)^0$ multiplies $f$ by a constant. Then we can take $t\in T$ and $\zeta\in \eta(\Aut(X,f))$ such that $\zeta\circ t\circ\psi$ maps every $y_i$ into $y_i$. Since $\KK[X]$ is invariant under each multiple, $\zeta\circ t\circ\psi\in \eta(\Aut(X,f))$. Thus, $\psi\in \eta(\Aut(X,f))\times T$. That is, $\psi\in \eta(\Aut(X,f)^0)\times T$.
 
$(ii)$. As we saw above, each element of $\Ker\xi$ multiplies each $y_i$ by an element of $\KK[X]^\times$. If $\KK[X]^\times=\KK^\times$, then each element $\varphi$ of $\Ker\xi$ multiplies each $y_i$ by a constant. Then, as above, we can prove that $\varphi\in \eta(\Aut(X,f))\times T$. 

\end{proof}

A direct corollary of Theorem~\ref{dva} is the following statement. 
\begin{cor}\label{clcv}
Suppose $X$ is rigid and the neutral component of its authomorphism group $\Aut(X)^0$ is a torus. Let $W=\mathrm{Susp}(X,f,k_1,\ldots,k_m)$, $m\geq 2$,  $\gcd(k_1,\ldots,k_m)=1$. Then $\Aut(W)^0$ is a torus. Moreover, if $\Aut(X)$ is a finite extension of the torus  $\Aut(X)^0$ and $X$ does not admits any non-constant invertible functions, then $\Aut(W)$ is a finite extension of the torus  $\Aut(W)^0$.
\end{cor}

\section{Automorphisms of rigid varieties\\ with a torus action of complexity $\leq 1$}

Recall that an irreducible variety $X$ is called \emph{toric} if it admits an action of an algebraic torus $T$ with an open orbit. It can be assumed that $\dim T=\dim X$. Toric varieties are often assumed to be normal by definition. But we allow that~$X$ can be non-normal. Moreover, it is proved in \cite{AKZ} that a normal toric variety without non-constant invertible function is flexible, which means that it is never rigid. In \cite[Theorem~2]{BG}, a criterion that a non-normal toric variety is rigid is obtained. In \cite[Theorem~3]{BG} and \cite[Remark~1]{BG}, it is proved that $\Aut(X)^0=T$, and if $X$ does not admit non-constant invertible functions, then $\Aut(X)$ is a finite extension of $T$. 
\begin{re}
 Note that this can be proved using our technique of isolated irreducible semi-invariants. Indeed, to every toric variety corresponds a finitely generated submonoid $P$ in the character group $\mathfrak{X}(T)\cong\mathbb{Z}^n$, which is a weight submonoid.
That is, $\KK[X]=\oplus_{m\in P}\KK\chi^m$, where $\chi^m$ is the character corresponding to~$m\in\mathfrak{X}(T)$.  This group generates the cone $\sigma^\vee\subseteq V\cong\mathbb{Q}^n=\mathfrak{X}(T)\otimes_\mathbb{Z}\mathbb{Q}$. A variety~$X$ admits non-constant invertible functions if and only if $\sigma^\vee$ contains a line. Let $L=\sigma^\vee\cap (-\sigma^\vee)$ be the largest subspace contained in $\sigma^\vee$.  We can consider the image $\gamma$ of $\sigma^\vee$ in the quotient space $V/L$. Then $\gamma$ is a pointed cone. It is easy to see that $\chi^m$ such that $m\in L$ is invertible and $\chi^m$ such that the images of $m$ are primitive vectors on the rays of $\gamma$ are isolated $T$-semi-invariants. Therefore, the algebra $\KK[X]$ is integer over the algebra $\mathcal{A}$ generated by all invertible functions and isolated irreducible $T$-semi-invariants. According to Corollary~\ref{cledct} we obtain the objective.   
\end{re}
Now let us consider a trinomial variety $X$ with the action of the torus $\mathbb{T}$ of complexity one, see Section~\ref{tv}. 
\begin{lem}\label{tip} 
Let $X$ be a trinomial variety of type 1. Suppose that there exists~$s$ such that $n_s>1$. Then $\KK[X]$ is an integer extension of $\mathcal{A}$, where $\mathcal{A}$ is the subalgebra generated by all isolated irreducible $\mathbb{T}$-semi-invariants.  
\end{lem}
\begin{proof}  
It follows from Lemma~\ref{lelem} that all $T_{ij}$ with $n_i>1$ are isolated irreducible $\mathbb{T}$-semi-invariants. Also, each $S_j$ is an isolated irreducible $\mathbb{T}$-semi-invariant. 
We can consider linear combinations of equations that give $X$ to get $T_i^{l_i}-T_s^{l_s}=a_s-a_i$. This shows that $\KK[X]$ is an integer extension of the algebra generated by all $T_{ij}$ with $n_i>1$ and all $S_k$. 
\end{proof}
\begin{prop}\label{prpp}
Suppose $X$ is a rigid trinomial variety with the action of the torus $\mathbb{T}$ of complexity one.
\begin{enumerate}
    \item[$(i)$] The neutral component $\Aut(X)^0$ is a torus $\overline{\mathbb{T}}$. 
    \item[$(ii)$] If $X$ does not admit non-constant invertible functions, then $\Aut(X)$ is a finite extension of $\overline{\mathbb{T}}$.  
\end{enumerate}
\end{prop}
\begin{proof}
Suppose $X$ is of type 1. If there exists $s$ such that $n_s>1$, then using Lemma~\ref{tip} and Corollary~\ref{nmmn} we have the objective. If all $n_i=1$, then $X$ is a surface and the statement of the proposition follows from \cite[Theorem~1.0.3]{PZ}. 

Suppose now that $X$ is of type 2. Then the action of $\mathbb{T}$ on $X$ is pointed, see Definition~\ref{ddppd}. If $\mathbb{T}$ is the maximal torus in $\Aut(X)$, therefore, by \cite[Proposition~1]{AG}, the authomorphism group $\Aut(X)$ is a finite extension of $\mathbb{T}$. If $\mathbb{T}$ is not the maximal torus, then the action of the maximal torus $\overline{\mathbb{T}}$ is also pointed and~$\Aut(X)$ is a finite extension of $\overline{\mathbb{T}}$.
\end{proof}

\begin{theor}\label{vteo}
Suppose $X$ is a rigid normal rational irreducible affine variety with only constant invertible regular functions, finitely generated divisor class group, and admitting a complexity one action of a torus $\mathbb{T}$. 
Then the neutral component~$\Aut(X)^0$ is a torus. 
\end{theor}
\begin{proof}
It is proved in~\cite[Corollary~1.9]{H-W}, that any normal rational irreducible affine variety $X$ with only constant invertible functions, finitely generated divisor class group, and admiting a torus action of complexity one can be canonically obtained by a categorical quotient of a trinomial variety $\overline{X}$ by an action of a quasitorus~$H\subseteq \mathbb{H}$, where $\mathbb{H}$ is the the centralizer of  $\mathbb{T}$ in $\mathrm{Aut}(X)$, which is also a quasitorus.

This realization of the variety $X$ by the categorical quotient is the Cox realization of this variety, see~\cite{ADHL}. By \cite[Theorem~4.2.3.2]{ADHL},  rigidity of $X$ implies rigidity of $\overline{X}$. It is proved in~\cite{AG1} that there is the following exact sequence of groups: 
$$
1\rightarrow H\rightarrow N_{\mathrm{Aut}(\overline{X})}(H) \rightarrow \mathrm{Aut}(X)\rightarrow 1.
$$
Here, $N_{\mathrm{Aut}(\overline{X})}(H)$ is the normalizer of $H$ in the automorphism group of $\overline{X}$. By Proposition~\ref{prpp}, the neutral component of $ \Aut(\overline{X})$ is a torus. Since we have $N_{\mathrm{Aut}(\overline{X})}(H)\subseteq \Aut(\overline{X})$, then the neutral component of $N_{\mathrm{Aut}(\overline{X})}(H)$ is a torus. Therefore, $\Aut(X)^0$ is a torus.
\end{proof}

\begin{re}
If $\overline{X}$ does not admit any non-constant invertible functions, then~$\Aut(X)$ is a finite extension of the torus $\Aut(X)^0$. However, this is not equivalent to the fact that $X$ does not admit non-constant invertible functions. Indeed, let us consider $\overline{X}=\mathbb{V}(x^2y^2-z^2w^2-1)$, then $xy\pm yz$ are invertible functions on  $\overline{X}$. If $X=\overline{X}/\mathbb{Z}_2$, where $\mathbb{Z}_2$ acts by multiplication of $x$ by $\pm1$, then $X$ does not admit any invertible functions, see \cite[Theorem~1.2(i)]{H-W}. 

An interesting question is whether it follows from the conditions of Theorem~\ref{vteo} with the additional assumption that $\KK[X]^\times=\KK^\times$ that $\Aut(X)$ is a finite extension of the torus.
\end{re}

\end{document}